\newcommand{\norm}[1]{\left\lVert#1\right\rVert}
\newcommand{\xddots}{%
  \raise 4pt \hbox {.}
  \mkern 6mu
  \raise 1pt \hbox {.}
  \mkern 6mu
  \raise -2pt \hbox {.}
}
\DeclareMathOperator*{\Minimize}{Minimize:}
\newtheorem{theorem}{\bf{Theorem}}
\newtheorem{assumption}{\bf{Assumption}}
\newtheorem{remark}{\bf{Remark}}
\newenvironment{proof}{\textit{Proof:}}{\hfill$\square$}
\title{Decentralized Droop-based Finite-Control-Set Model Predictive Control of Inverter-based Resources in Islanded AC Microgrid}
\author{Ayobami Olajube, Koto Omiloli, Satish Vedula, and \\ Olugbenga Moses Anubi}
\date{}
\begin{document}

\maketitle
\begin{centering}
Department of Electrical and Computer Engineering, the Center for Advanced Power Systems, Florida State University 
E-mail: \{aolajube, kao23a\ , svedula\ , oanubi\}@fsu.edu
\end{centering}

\section{Abstract}
This paper presents an improved droop control method to ensure effective power sharing, voltage regulation, and frequency stabilization of inverter-based resources (IBRs) connected in parallel in an islanded AC microgrid. In the contemporary droop control algorithm, the distance between connected inverters affects the effectiveness of the active power-frequency and the reactive power-voltage droop characteristics which results in poor power sharing at the primary level of the microgrid. That is, high impedance emanating from long transmission lines results in instability, poor voltage tracking, and ineffective frequency regulation. Hence, in this work, we use a finite-control-set model predictive controller (FCS-MPC) in the inner loop, which gives efficient voltage tracking, good frequency regulation, and faster performance response. FCS-MPC is easy to implement in fast switching converters and does not suffer from computational burden unlike the continuous-set MPC and is also devoid of issues of multiple-loop, parameter variation, and slow response associated with conventional droop control methods. We derived the condition for bounded stability for the FCS-MPC and the proposed method is tested via a numerical simulation on three IBRs. The results show effective power sharing, capacitor voltage tracking, and efficient frequency regulation with reduced oscillations to changes in load. 

\textbf{Keywords:} Droop control, finite-control-set model predictive control, active power sharing, reactive power sharing, voltage regulation, inverter-based resources, islanded microgrid.

\section{Introduction}
Traditional Microgrids (MGs) are a conglomeration of numerous fossil-fuel-driven power generation elements like synchronous machines (SMs). However, in recent times, an increase in penetration of inverter-based resources (IBRs) such as solar and wind generation has caused a shift in power generation trends in the existing infrastructure. The addition of energy storage elements (ESEs) further upped the choice of power allocation in the MGs in terms of the available degree of freedom. Based on their configuration and the mode of operation MGs are classified into two categories known as \emph{grid-connected} and \emph{islanded} modes \cite{Hatziargyriou_2007}. Control of the traditional MG employs a hierarchical control structure \cite{Guerrero_2011}, with droop-control being the lowest level followed by the power and the energy management (EM) layers. The impending integration of IBRs into the traditional MGs is giving rise to structural and operational changes in the form of transitioning away from the SMs which are ramp-limited due to their inertial properties. However, the low inertia and the fast switching properties of the IBRs pose a major challenge in the form of assessing their stability and robustness \cite{Milano2018FoundationsAC}. One such method that is widely embraced to restore the inertia to the grid is to introduce \emph{virtual inertia} \cite{Anubi_2022,cady2017distributed}.
 
Regardless of the inertial properties of the SMs and the IBRs, the control objective remains unequivocal in terms of balancing the active and reactive power in the MGs while simultaneously maintaining the grid frequency around the desired operating point. Droop control is the method widely employed in addressing the control of the power and the frequency namely \emph{voltage droop} and \emph{frequency droop} \cite{silva2021string}. The IBRs are operated in parallel mode due to high reliability and flexibility in terms of operation. However, the application of droop control in such a parallel setting is not suitable while sharing a nonlinear load due to the presence of harmonic currents \cite{khaledian2017analysis}. Moreover, the frequency and amplitude deviation in the IBRs droop settings are dependent on the load scenarios \cite{Guerrero_2013}. Addition of \emph{virtual} control loops such as adding a virtual resistance into the droop methods and decentralizing the control (consensus-based) are proposed to address the complications arising from the presence of current harmonics \cite{Lu_2018}. Although Communication-based and consensus control algorithms offer efficient voltage regulation and effective power sharing among connected IBRs, the methods suffer from the huge cost and reliability of communication links. On the other hand, droop control does not require communication between IBRs to effect power sharing and voltage regulation \cite{9612101}. Multiple loops of conventional droop techniques often result in poor power-sharing due to parameter variations, especially in the current loop that generates PWM signals for the inverters \cite{6254006}. 

Thus, in this paper, we utilize FCS-MPC in the inner loop to directly generate optimal switching signals for the IBRs. Finite-control-set model predictive control (FCS-MPC) has emerged as a promising discrete-time model-based nonlinear predictive control algorithm applicable to fast-switching power electronics dominated systems. Unlike the continuous-set MPC that requires modulators and a longer horizon to generate the pulse-width modulated (PWM) signals, FCS-MPC eliminates the need for modulators and accounts for nonlinearities in the power converter dynamics \cite{ bayhan2016model}. It uses the horizon of h = 1, which significantly reduces the computational burden that may result from very low sampling time in high-frequency converters.    
To this end, we develop a decentralized droop-based FCS-MPC-based control algorithm to ensure adequate active and reactive power sharing between the parallel connected IBRs and the accurate tracking of the filter output filter capacitor voltage. The droop control method provides a reference current for the FCS-MPC algorithm which in turn generates the PWM signal to switch the parallel connected IBRs. We examine the behaviors of the IBRs with respect to their droop coefficients. Also, the effect of sudden changes in the load on the droop characteristics of the inverters is examined. Consequently, the key contributions in this paper are:
\begin{enumerate}
    \item The unique stability analysis for finite-set model predictive control (FCS-MPC) algorithm.
    \item The development of the decentralized droop-based finite-control-set model predictive control of several IBRs to regulate the AC grid voltage error to zero without using PWM modulators. 
    \item The evaluation of the performance of the droop-based FCS-MPC on both P-f and Q-V droop characteristics through simulations on three parallel connected IBRs.
     
\end{enumerate}
The rest of the paper is organized as follows: section II shows the preliminaries and notations used throughout this paper, section III shows the model development of decentralized inverter-based resources in an islanded AC grid, section IV depicts the control design and algorithm development, section V enumerates the numerical simulation results and the conclusion follows in section VI.

\section{Notations and Preliminaries}\label{Not_Pre}
In this paper, we use $\mathbb{R}^n$, $\mathbb{R}^{n\times m}$ to denote the space of real numbers, real vectors of length $n$, and real matrices of $n$ rows, and $m$ columns, respectively.
$\mathbb{R}_+$ indicates a set of positive real numbers.
$X^\top$ denotes the transpose of the quantity $X$.
Normal-face lower-case letters ($x\in\mathbb{R}$) are used to represent real scalars, bold-face lower-case letters ($\mathbf{x}\in\mathbb{R}^n$) represent vectors, while normal-face upper case ($X\in\mathbb{R}^{n\times m}$) represents matrices. $X\succ 0 \hspace{1mm} (\succeq0)$ denotes a positive definite (semi-definite) matrix. Furthermore, $J=\begin{bmatrix}
    0&1\\
    -1&0\\
    \end{bmatrix}$ is a skew-symmetric matrix, $M \otimes N =\begin{bmatrix}
    M_{11}N&M_{12}N\\
    M_{21}N&M_{22}N\\
    \end{bmatrix}$ denotes the Kronecker product of matrices $M$ and $N$, $\mathbf{1}_n$ is a column vector of ones, and the identity matrix of size $n$ is denoted by $I_n$. 
    
\subsection{Stability Analysis of Finite-Set Predictive Control System}
Consider a discrete-time system described by
\begin{equation}
      \textbf{x}(k+1) = f(\textbf{x}(k),\textbf{u}(k)) 
\end{equation}
Where $\textbf{x}(k) \in \mathcal{X} \subset \mathbb{R}^{n}$ is the system state and $\textbf{u}(k) \in \mathcal{U} \subset \mathbb{R}^{m}$ is the control input. If we assume the equilibrium point exists at the origin, we have $f(0,0) = 0$.
Suppose there exists positive constants $\alpha_1$, $\alpha_2$, $\alpha_3$, $\alpha_4$ and a positive definitive function $V: \mathbb{R}^n \rightarrow \mathbb{R}$ such that 
\begin{equation}\label{eqn:V}
\begin{array}{c}
\alpha_1\norm{\textbf{x}}^2 \leq V(\textbf{x}) \leq \alpha_2\norm{\textbf{x}}^2 \\
V(f(\textbf{x},\bar{u}(\textbf{x}))-V(\textbf{x}) \leq -\alpha_3 \norm{\textbf{x}}^2 \\
|V(\textbf{x})-V(\textbf{y})| \leq \underbrace{\alpha_4 (\norm{\textbf{x}}+\norm{\textbf{y}})}_{2\alpha_4\bar{\textbf{z}}}\norm{\textbf{x}-\textbf{y}} \leq 2\alpha_4\bar{\textbf{z}}\norm{\textbf{x}-\textbf{y}},
\end{array}
\end{equation}
where
\begin{equation*}
  \bar{\textbf{z}}=\sup_{\textbf{x} \in \mathcal{X}}\norm{\textbf{x}}  
\end{equation*}
for all $\textbf{x} \in \mathcal{X}\subset \mathbb{R}^{n}$, where $\bar{u}:\mathbb{R} \rightarrow \mathcal{U}$ is the associated feedback control and $\mathcal{U} \subset \mathbb{R}^{m}$ and $\mathcal{X} \subset \mathbb{R}^{n}$ are compact subsets. 
It follows that the origin of (1) is asymptotically stable under the control law $\bar{\textbf{u}} \in \mathcal{U}$ and $\mathcal{X}$ is positively invariant with respect to the closed loop dynamics.

Now let $\textbf{u}_s =\left\{u_1, u_2, \ldots, u_M\right\} \subseteq \mathcal{U}$ be a finite sampling of the compact set $\mathcal{U}$ such that 
\begin{equation}
\sup _{\textbf{u} \in \mathcal{U}} \min _{1 \ll i \leq M}\left\|\textbf{u}-\textbf{u}_i\right\|_2 \leq \epsilon
\end{equation}
for some $\epsilon > 0$. Here, $\epsilon$ is the quantization error. We are interested in the stability of (1) using the control law 
\begin{equation}
\textbf{u}(x)=\underset{1 \leq i \leq M}{\operatorname{argmin}}\left\|\bar{\textbf{u}}(\textbf{x})-\textbf{u}_i\right\|_2
\end{equation}
Clearly, the projection error $\left\|{\textbf{u}}(\textbf{x})-\bar{\textbf{u}}(\textbf{x})\right\|_2 \leq \epsilon$ follows from (3).
\begin{theorem}
    Suppose there exists a positive definite function $V:\mathbb{R}^n\rightarrow\mathbb{R}$ satisfying the inequalities in \eqref{eqn:V} for some $\alpha_1,\alpha_2,\alpha_3,\alpha_4\in\mathbb{R}_+$ such that
    \begin{align}
        \frac{\alpha_3}{\alpha_1}<2,
    \end{align}
    Then, the origin of the closed loop system $\textbf{x}(k+1) = f(\textbf{x}(k), \textbf{u}(k))$ under the finite-set control law in (4) is stable and 
\begin{equation*}
\lim _{k \rightarrow \infty}\|\textbf{x}(k)\| \leq \sqrt{\frac{2L_v \alpha_4 \bar{\textbf{z}}}{\alpha_3}\epsilon},
\end{equation*}
where $L_v$ is a Lipschitz constant of the function $f(x)$ in (1) and  $\bar{\textbf{z}}$ is related to compact set $\mathcal{X} \subset \mathbb{R}^n$ as given in (2).
\end{theorem}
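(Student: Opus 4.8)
The plan is to view the finite-set feedback $\textbf{u}(\textbf{x})$ in (4) as a quantized perturbation of the ideal feedback $\bar{\textbf{u}}(\textbf{x})$, whose closed loop already admits the Lyapunov function $V$ of \eqref{eqn:V}, and then to track how the quantization error $\epsilon$ erodes the guaranteed decrease of $V$. First I would split the one-step increment of $V$ along the actual (finite-set) trajectory as
\begin{equation*}
V(\textbf{x}(k+1)) - V(\textbf{x}(k)) = \underbrace{\big[V(f(\textbf{x},\textbf{u}(\textbf{x}))) - V(f(\textbf{x},\bar{\textbf{u}}(\textbf{x})))\big]}_{\text{quantization perturbation}} + \underbrace{\big[V(f(\textbf{x},\bar{\textbf{u}}(\textbf{x}))) - V(\textbf{x})\big]}_{\le\,-\alpha_3\norm{\textbf{x}}^2},
\end{equation*}
so that the second bracket is immediately controlled by the ideal-feedback decrease in \eqref{eqn:V}, and only the first bracket needs to be estimated.

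To estimate the perturbation bracket I would chain the two regularity hypotheses in \eqref{eqn:V}. The Lipschitz-type bound on $V$ gives $|V(f(\textbf{x},\textbf{u})) - V(f(\textbf{x},\bar{\textbf{u}}))| \le 2\alpha_4\bar{\textbf{z}}\,\norm{f(\textbf{x},\textbf{u}) - f(\textbf{x},\bar{\textbf{u}})}$, and Lipschitz continuity of $f$ with constant $L_v$, together with the projection bound $\left\lVert\textbf{u}(\textbf{x})-\bar{\textbf{u}}(\textbf{x})\right\rVert_2 \le \epsilon$, yields $\norm{f(\textbf{x},\textbf{u}) - f(\textbf{x},\bar{\textbf{u}})} \le L_v\epsilon$. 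Substituting produces the central one-step inequality
\begin{equation*}
V(\textbf{x}(k+1)) - V(\textbf{x}(k)) \le -\alpha_3\norm{\textbf{x}(k)}^2 + 2L_v\alpha_4\bar{\textbf{z}}\,\epsilon.
\end{equation*}

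From this inequality I would extract ultimate boundedness. The right-hand side is strictly negative whenever $\norm{\textbf{x}(k)}^2 > \tfrac{2L_v\alpha_4\bar{\textbf{z}}}{\alpha_3}\epsilon$, so $V$ strictly decreases outside the ball of radius $\sqrt{2L_v\alpha_4\bar{\textbf{z}}\epsilon/\alpha_3}$; since $V \ge \alpha_1\norm{\textbf{x}}^2 \ge 0$ it cannot decrease indefinitely, forcing the trajectory into that ball, which is exactly the stated asymptotic bound (and letting $\epsilon \to 0$ recovers asymptotic stability of the origin). To make this rigorous rather than heuristic I would convert the inequality into an affine recursion $V(\textbf{x}(k+1)) \le (1-\rho)V(\textbf{x}(k)) + 2L_v\alpha_4\bar{\textbf{z}}\,\epsilon$ by replacing $\norm{\textbf{x}}^2$ with a multiple of $V(\textbf{x})$ through the sandwich bounds; the hypothesis $\alpha_3/\alpha_1 < 2$ is precisely what keeps the contraction factor $1-\rho$ inside $(-1,1)$, so that iterating the recursion gives a convergent geometric series and a genuine invariant neighborhood of the origin.

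The main obstacle I anticipate is not the algebra but the validity of the constants throughout the trajectory: the Lipschitz estimates and the quantity $\bar{\textbf{z}} = \sup_{\textbf{x}\in\mathcal{X}}\norm{\textbf{x}}$ only hold while $\textbf{x}(k)$ remains in the compact set $\mathcal{X}$, so I would need to argue positive invariance of $\mathcal{X}$ (or of a sublevel set of $V$ contained in it) under the finite-set law before the decrease argument can be closed. The second delicate point is the conversion between $\norm{\textbf{x}}^2$ and $V(\textbf{x})$: doing it loosely introduces an extra $\alpha_2/\alpha_1$ conditioning factor, and obtaining the clean radius $\sqrt{2L_v\alpha_4\bar{\textbf{z}}\epsilon/\alpha_3}$ in the statement requires reading off the bound directly from the level at which the guaranteed decrease vanishes, which is exactly where the ratio condition $\alpha_3/\alpha_1 < 2$ enters.
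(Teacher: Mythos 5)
Your proposal follows essentially the same route as the paper's proof: the identical add-and-subtract decomposition around $V(f(\textbf{x},\bar{\textbf{u}}(\textbf{x})))$, the same chaining of the Lipschitz bound on $V$, the Lipschitz constant $L_v$ of $f$, and the projection error $\epsilon$ to get $V_{k+1}-V_k \leq -\alpha_3\norm{\textbf{x}_k}^2 + 2L_v\alpha_4\bar{\textbf{z}}\epsilon$, and the same conversion to an affine recursion summed as a geometric series. Your flagged ``delicate point'' about the $\norm{\textbf{x}}^2$-to-$V$ conversion is well taken: the paper itself silently uses the sandwich bound in the wrong direction (writing $-\alpha_3\norm{\textbf{x}_k}^2 \leq -\tfrac{\alpha_3}{\alpha_1}V_k$, which requires $V_k\leq\alpha_1\norm{\textbf{x}_k}^2$ rather than the assumed $V_k\geq\alpha_1\norm{\textbf{x}_k}^2$), so a fully rigorous version of either argument yields the stated radius only up to an extra $\alpha_2/\alpha_1$ factor.
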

\begin{proof}
    Let us consider a lyapunov function V(\textbf{x}) and a Lipschitz continuous function f(\textbf{x}) such that:
\begin{align*}
     V(f(\textbf{x},\textbf{u}(\textbf{x})))-V(\textbf{x}) &= V(f(\textbf{x},\textbf{u}(\textbf{x})))+ \\ &
    V(f(\textbf{x},\bar{\textbf{u}}(\textbf{x})))+V(f(\textbf{x},u(\textbf{x}))) \\ &
    -V(f(\textbf{x},\bar{\textbf{u}}(\textbf{x})))-V(\textbf{x})\\
     &\leq L_v\norm{f(\textbf{x},u(\textbf{x}))-f(\textbf{x},\bar{\textbf{u}}(\textbf{x}))} &\\- \alpha_3\norm{\textbf{x}}^2 \\
    &\leq 2 L_v \alpha_4 \bar{\textbf{z}}\norm{\textbf{u}(\textbf{x})-\bar{\textbf{u}}(\textbf{x})}-\alpha_3\norm{\textbf{x}}^2 \\
     &\leq 2\epsilon L_v \alpha_4 \bar{\textbf{z}} - \alpha_3\norm{\textbf{x}}^2. 
\end{align*}
For all $\textbf{x} \in \mathcal{X}$. Let $V_k \triangleq V(\textbf{x}(k))$, thus 
\begin{align*}
     V_{k+1}-V_k &\leq 2\epsilon L_v \alpha_4 \bar{\textbf{z}} - \alpha_3\norm{\textbf{x}_k}^2 \\ &
    \leq 2\epsilon L_v \alpha_4 \bar{\textbf{z}} - \frac{\alpha_3}{\alpha_1}V_k.
\end{align*}
This implies that 
\begin{equation*}
     V_{k+1} \leq (1-\frac{\alpha_3}{\alpha_1})V_k + 2\epsilon L_v \alpha_4 \bar{\textbf{\textbf{z}}}     
\end{equation*}
\begin{equation*}
    V_k \leq (1-\frac{\alpha_3}{\alpha_1})^kV_o + 2\epsilon L_v \alpha_4 \bar{\textbf{z}} \sum_{i=0}^{k-1} (1-\frac{\alpha_3}{\alpha_1})^i.
\end{equation*}
Hence, $V(\textbf{x}_k)$ is bounded and, assuming bounded initial conditions
\begin{equation*}
    \lim _{k \rightarrow \infty} V(\textbf{x}_k) \leq \frac{2L_v\alpha_1 \alpha_4 \bar{\textbf{z}} }{\alpha_3}\epsilon,
\end{equation*}
which implies that
\begin{equation*}
\lim _{k \rightarrow \infty}\|\textbf{x}(k)\| \leq \sqrt{\frac{2L_v \alpha_4 \bar{\textbf{z}}}{\alpha_3}\epsilon}
\end{equation*}
\end{proof}

\subsection{Communicationless design of Decentralized IBRs}
We consider an autonomous AC microgrid comprising inverter-based resources (IBRs) connected in parallel to supply power to constant power loads. It is represented as a graph $\mathcal{G} = (\mathcal{V},\mathcal{E}, \mathcal{B}_{ik})$, where $\mathcal{V} = \{v| v_i \in \mathbb{R}^n\}$ represents a set of IBRs and $\mathcal{E} \in \mathcal{V} \times \mathcal{V} = \{1,....,m\}$ is a set of transmission lines connecting the IBRs. The incident matrix $\mathcal{B}_{ik} \in \mathbb{R}^{n\times m}$ showing the interconnections and current flow between the IBRs is shown below:
\begin{equation*}
\mathcal{B}_{i k}= \begin{cases}+1, & \text { if IBR } i \text { starts \textit{i} to } k, \\ -1, & \text { if IBR } i \text { ends \textit{i} to } k, \\ 0, & \text { otherwise. }\end{cases}
\end{equation*}
When current flows from the i-th IBR to the k-th IBR, we assign a +1 sign and when current flows in the reverse direction, -1 is assigned. But, when there is no transmission line between any two inverters, we use zero.

\section{Model Development}
A hypothetical autonomous AC microgrid powered by IBRs is shown in Figure~\ref{fig:figa}. Each IBR is connected to the load through an LC filter, which helps to dampen harmonics and ensure accurate extraction of the fundamental frequency. An IBR shares current with the neighboring IBR through the resistive-inductive transmission line.  

\begin{remark}
    \it{The decentralized IBR dynamics in~(\ref{dq0_eqn}) is a balanced and symmetrical system. Hence, the zero component of the dq0 rotating reference frame is zero. Then, we will stick to dq components only for the rest of this paper.}
\end{remark}

The following important assumptions are made to establish the generalized state-space representation of the system shown in Figure \ref{fig:figa}:
\begin{assumption}
  \it{For n homogeneous number of IBRs, switching losses are neglected during power transformation from DC to AC.}
\end{assumption}

\begin{assumption}
  \it{The DC power supply to each IBR is stiff and the output voltages $\mathbf{u}_{d q}$ are local and unique to each inverter \cite{8810506}.} 
\end{assumption}

\begin{assumption}
  \it{The transmission line between each IBR is highly inductive (X$\gg$R) \cite{laaksonen2005voltage}. The power angle or phase shift angle under this condition becomes very small.}
\end{assumption}
\begin{assumption}
  \it{The load currents $\textbf{i}_{L{dq}}$ at the PCC are unknown, bounded, and obtainable from load voltage measurements at each bus.}
\end{assumption}

\begin{figure}[h]
    \centering \includegraphics[width=0.45\textwidth]{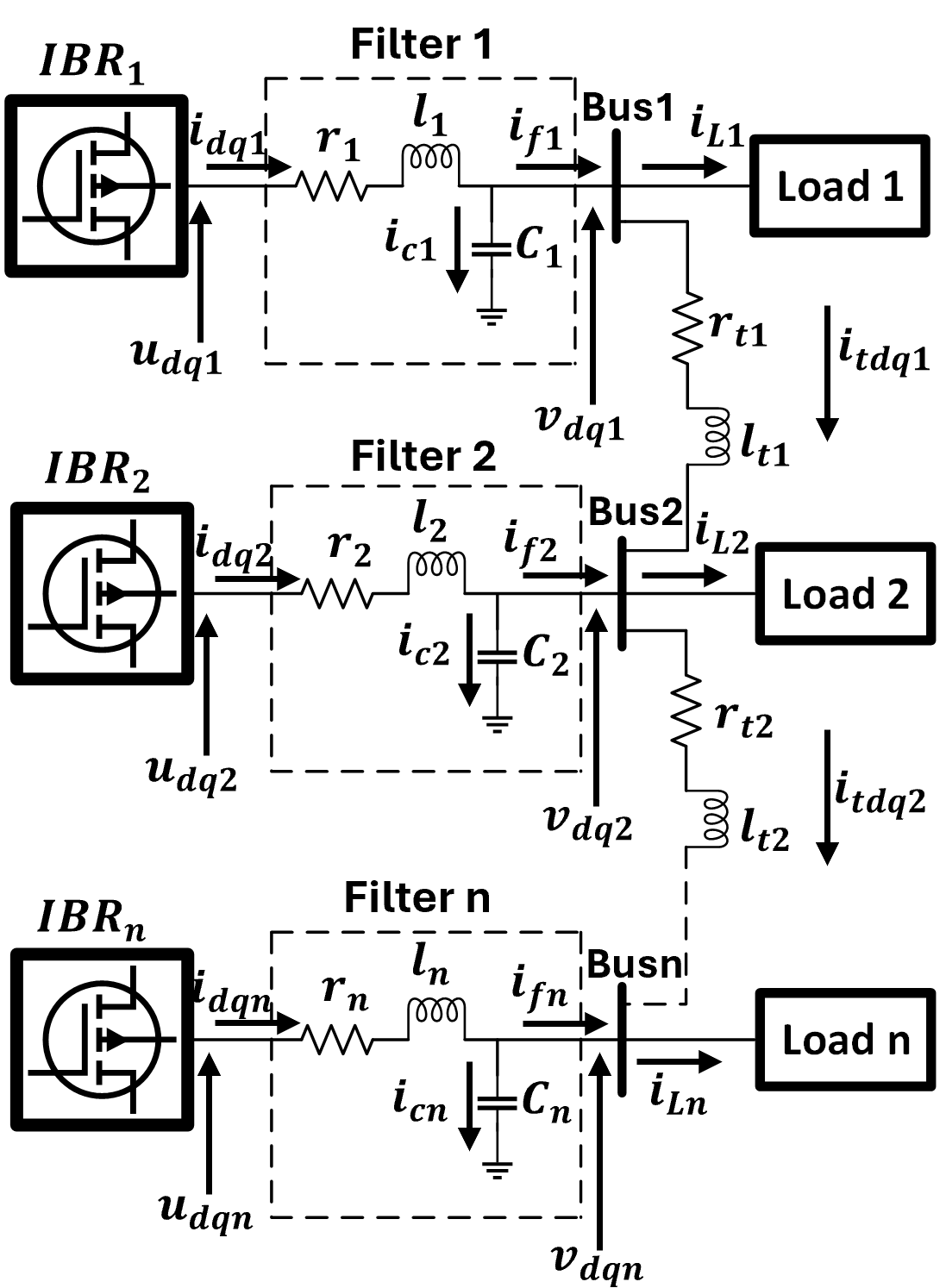}
    \caption{Decentralized Inverter-based Resources}
    \label{fig:figa}
\end{figure}

The model of the decentralized scheme shown in Figure 1 in \textit{dq0} reference frame is given as:
\begin{equation}
\begin{aligned}\label{dq0_eqn}
& L\frac{d \mathbf{i}_{dq}}{dt}=- \mathbf{v}_{\mathrm{dq}}-(\omega_g (J\otimes L)+ RI_n)\mathbf{i}_{\mathrm{dq}}+\mathbf{u}_{\mathrm{dq}}, \\
& C_s\frac{d \mathbf{v}_{d q}}{d t}=-\omega_g (J\otimes C_s)\mathbf{v}_{\mathrm{dq}}+ \mathbf{i}_{\mathrm{dq}}+ \mathcal{B}_{\mathrm{ik}} \mathbf{i}_{\mathrm{tdq}}- \mathbf{i}_{Ldq}, \\
& L_t\frac{d \mathbf{i}_{t{d q}}}{d t}=- \mathcal{B}_{\mathrm{ik}}^\top \mathbf{v}_{\mathrm{dq}}-(\omega_g (J\otimes L_t)+ RI_m)\mathbf{i}_{\mathrm{tdq}},
\\
& \mathbf{y}_{\mathrm{dq}}= [\mathbf{v}_{\mathrm{dq}};\mathbf{i}_{\mathrm{dq}}]
\end{aligned}
\end{equation}
where $\mathbf{i}_{dq} = [\mathbf{i}_d^\top,  \mathbf{i}_q^\top]^\top, \mathbf{v}_{dq}= [\mathbf{v}_d^\top,  \mathbf{v}_q^\top]^\top, \mathbf{u}_{dq}= [\mathbf{u}_d^\top,  \mathbf{u}_q^\top]^\top, \newline
\mathbf{y}_{dq}= [\mathbf{y}_d,  \mathbf{y}_q]^\top \in \mathbb{R}^{2n}$,  $\mathbf{i}_{L{d q}}= [\mathbf{i}_{Ld}^\top,  \mathbf{i}_{Lq}^\top]^\top \in \mathbb{R}^{2n}$ and $\mathbf{i}_{t{d q}}= [\mathbf{i}_{td}^\top,  \mathbf{i}_{tq}^\top]^\top \in \mathbb{R}^{2m}$. Then, the filter resistance matrix becomes $R = \textsf{diag}(r_1, r_2,\hdots,r_n)$, filter inductance matrix  $L=\textsf{diag}(l_1,l_2,\hdots,l_n)$, the filter capacitor matrix $C_s=\textsf{diag}(C_1,C_2,\hdots,C_n)$, the transmission line resistance matrix $R_t = \textsf{diag}(r_{t1}, r_{t2},\hdots,r_{tm})$ and transmission line inductance matrix $L_t =\textsf{diag}(l_{t1}, l_{t2},\hdots,l_{tm})$.

Hence, the state space dynamics of the entire system in equation (\ref{dq0_eqn}) is given as:
\begin{equation}
\begin{aligned}\label{ss_cont}
& \dot{\textbf{x}}(t)=\mathrm{A\textbf{x}(t)}+\mathrm{B\textbf{u}(t)}+\mathrm{E\textbf{i}_{L{dq}}(t)}, \\
& \mathrm{\textbf{y}(t)}=\mathrm{C\textbf{x}(t)},
\end{aligned}
\end{equation}
where $\textbf{x}(t) = [\mathbf{v}_d^\top, \mathbf{v}_q^\top, \mathbf{i}_d^\top, \mathbf{i}_q^\top,\mathbf{i}_{td}^\top,  \mathbf{i}_{tq}^\top]^\top  \in \mathbb{R}^{(4n+2m)}$ is the state vector, $\textbf{u}(t)=[\mathbf{u}_d^\top,  \mathbf{u}_q^\top]^\top \in \mathbb{R}^{2n}$ is the input vector and $\mathbf{y}(t)= [\mathbf{x}_d^\top,  \mathbf{x}_q^\top]^\top \in \mathbb{R}^{2n}$ is the output vector. $A \in \mathbb{R}^{(4n+2m)\times (4n+2m)}$ is the state matrix of the system, $B \in \mathbb{R}^{(4n+2m)\times (2n)}$, $C \in \mathbb{R}^{(2n)\times (4n)}$ is the measurement or output matrix and $E \in \mathbb{R}^{(4n+2m)\times (2n)}$ is the disturbance matrix resulting from the bounded load current demands at the AC buses. 

\begin{figure}[h]
    \centering \includegraphics[width=0.5\textwidth]{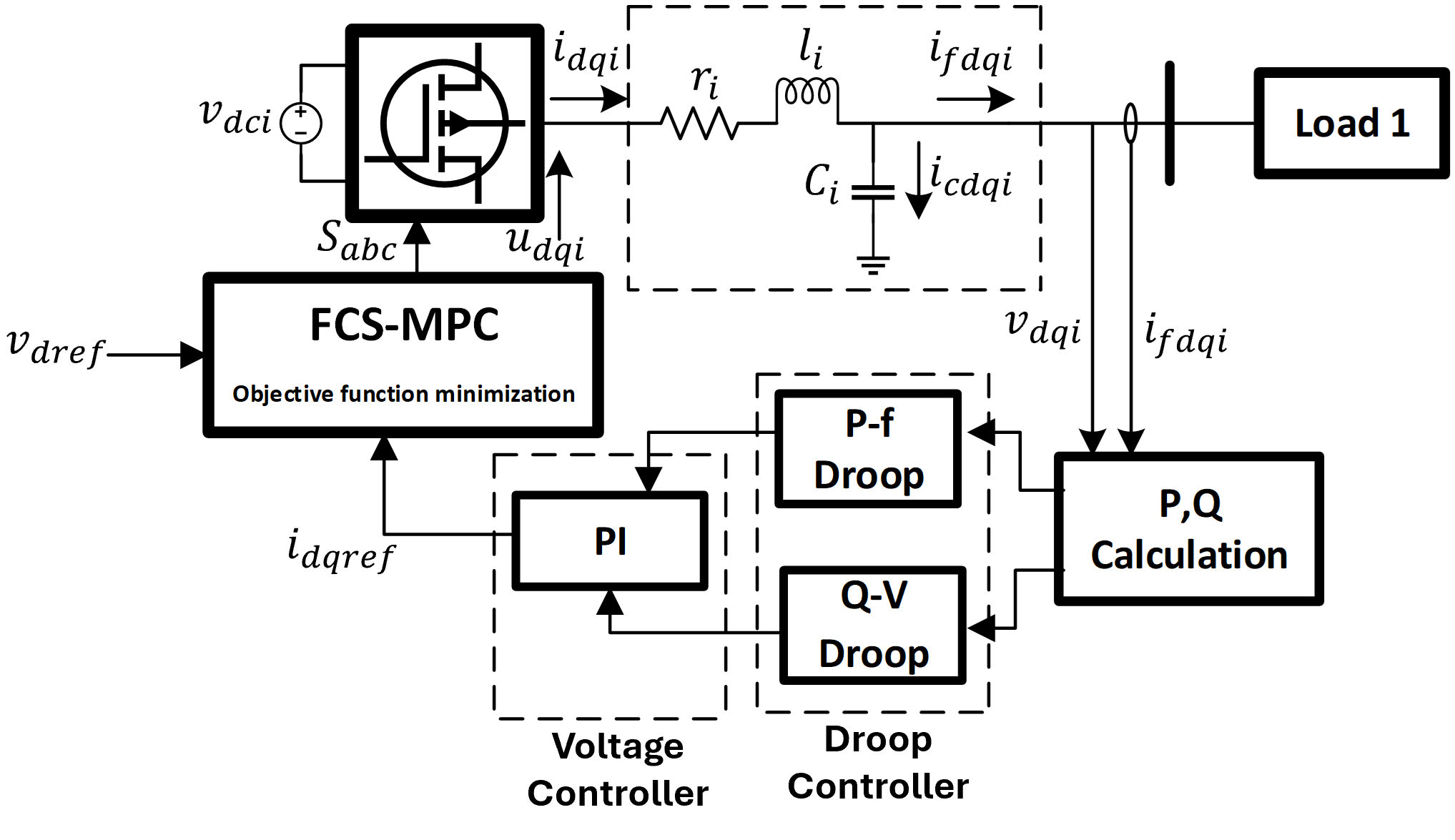}
    \caption{Droop-based FS-MPC Control Diagram}
    \label{fig:figb}
\end{figure}
\section{Control Development}
Figure~\ref{fig:figb} shows the control diagram of the i-th IBR supposedly connected to a j-th neighboring inverter through the transmission lines in the set $\mathcal{E} \in \mathcal{V} \times \mathcal{V} = \{1,....,m\}$. The filter output voltage, which is the same as the capacitor voltage $\mathbf{v}_{dq}$ and the output current of the filter $\mathbf{i}_{fdq}$ are used to compute the active and the reactive power references that are fed into the droop controller. The droop control controller provides  the frequency and the voltage required by the PI controller in the voltage control loop to compute the current reference $\mathbf{i}_{ref}$ for the FCS-MPC, which in turn provides the optimum PWM signals $S_{abc}$ to switch the IBRs.

The control objectives in this paper are to ensure perfect tracking of the IBR filter output voltage, AC grid frequency regulation, active and reactive power sharing and the stability analysis for the FCS-MPC. 
The next subsections detail the droop control development for power sharing and the Finite-set model predictive control formulation for the filter output voltage and current tracking.

\subsection{Droop Control development}
According to \cite{laaksonen2005voltage}, if $\delta_{ij}$ is very small for a highly inductive transmission line. That is, $\sin \delta_{ij}\approx\delta_{ij}$ and $\cos \delta_{ij}\approx 1$, 
where $\delta_{ij}$ is the phase shift angle, $v_i$ and $v_j$ are the output voltages of each inverter and $X_j$ is the inductive reactance of the transmission line. Then, equation for the output active power (P) and reactive power (Q) developed between an IBRi connected to a neighboring IBRj can be rewritten as
\begin{equation}
\begin{aligned}\label{PQ-simplified}
P &=\frac{v_i v_j}{X_j} \delta_{ij},\\
Q &=v_j\bigg(\frac{v_i-v_j}{X_j}\bigg).
\end{aligned}
\end{equation}

From (\ref{PQ-simplified}), the control of the voltage difference $\mathbf{v}_i-\mathbf{v}_j$ depends on the reactive power Q and that of the phase angle which is dynamically equivalent to the grid frequency, depends on the active power P. Hence, droop control for several IBRs is given by
\begin{equation}
\begin{aligned}\label{v-w_droop}
{v}_i &= {v}_{nom,i}-n_{qi} Q_i, \\
\omega_i  &=\omega_{g}-m_{pi} P_i,
\end{aligned}
\end{equation}
where $v_{nom,i}$ is the nominal voltage rating of each IBR that must be met for perfect voltage regulation, $\omega_g$ is a set of constant dq rotating reference frame frequency, $n_{qi}$ is the reactive power-voltage $(Q-V)$ droop coefficient of each IBR, $m_{pi}$ is the active power-frequency $(P-\omega)$ droop coefficient of each IBR and
\begin{equation}
\begin{aligned}\label{PQ_powers}
P_i &= \frac{3}{2}\mathbf{v}_{dqi}^T\mathbf{i}_{dqi},\\
Q_i &= \frac{3}{2}\mathbf{v}_{dqi}^TJ\mathbf{i}_{dqi}.
\end{aligned}
\end{equation}

The droop coefficients must be carefully selected or computed because they affect stability and the way loads are being shared between a set of inverter-based resources (IBRs)\cite{bayhan2016model}. Thus, for the total power P to be shared among n inverters, the load distribution between them must satisfy the following:
\begin{equation}
\begin{aligned}
m_{p1} P_1=m_{p2} P_2=\hdots&=m_{pn} P_n,\\
P_1+P_2+P_3+\hdots+P_n &= P.
\end{aligned}
\end{equation}

When the grid synchronization is perfectly matched at a steady state, active power is efficiently shared between IBRi and IBRj as long as the condition in (11) is satisfied.
\begin{equation}
\frac{\textbf{i}_{di}}{\textbf{i}_{dj}}=\frac{m_{pj}}{m_{pi}} \quad \forall i, j \in \mathcal{V} .
\end{equation}
That is using $P_i$ in equations (\ref{v-w_droop}), (\ref{PQ_powers}), and assuming perfect synchronization at steady state, $\omega_i  =\omega_g\mathbf{1}$. Thus, 
\begin{equation}\label{P_sync}
\frac{P_{i}}{P_{j}}=\frac{\frac{3}{2}\mathbf{v}_{dqi}^T\mathbf{i}_{dqi}}{\frac{3}{2}\mathbf{v}_{dqj}^T\mathbf{i}_{dqj}}=\frac{\mathbf{v}_{di} \mathbf{i}_{di}+\mathbf{v}_{qi} \mathbf{i}_{qi}}{\mathbf{v}_{dj} \mathbf{i}_{dj}+\mathbf{v}_{qj} \mathbf{i}_{qj}}=\frac{m_{pj}}{m_{pi}} \quad \forall i, j \in \mathcal{V} .
\end{equation}
Since voltage does not vary much in a voltage-regulated power grid and the power factor is almost unity such that $\textbf{v}_d$ component of the voltage is much higher than the $\textbf{v}_q$ component. Hence,
\begin{equation}
\frac{\mathbf{i}_{di}}{\mathbf{i}_{dj}}=\frac{m_{pj}}{m_{pi}} \quad \forall i, j \in \mathcal{V} .
\end{equation}
\begin{remark}\label{remark-2}
\textit{We assume that the AC microgrid is well-regulated, perfectly synchronized, balanced, and has a power factor approximately equal to 1}. 
\end{remark}
\subsection{Finite Control Set MPC Formulation}
To develop the FCS-MPC algorithm for the state space dynamics of the decentralized system shown in (3). In discrete state space form, the dynamics in (\ref{ss_cont}) then becomes
\begin{equation}
\begin{aligned}\label{ss_disc}
 \textbf{x}(k+1)&=\mathrm{A_d}\textbf{x}(k)+\mathrm{B_d}\textbf{u}(k) +\mathrm{E_d\textbf{i}_{L{dq}}(k)}, \\
\mathrm{\textbf{y}(k)}&=\mathrm{C_d}\textbf{x}(k),
\end{aligned}
\end{equation}
where $\mathrm{A_d}=I+\mathrm{A}\tau_s$, $\mathrm{B_d}=\mathrm{B}\tau_s$, $C_d=C$ and $\mathrm{E_d}=\mathrm{E}\tau_s$. The dimensions of the state matrices and the state variables remain the same except for the discrete input matrix $B_d$ and the control inputs $\textbf{u}(k)$ in (\ref{ss_disc}). The control input now belongs to the finite set of inputs $\mathcal{U}$. That is:
\begin{equation}\label{input_set}
 \textbf{u}(k) \in \mathcal{U}=\left\{u_1, u_2, \ldots, u_M\right\} \subset \mathbb{R}^{2n}
\end{equation}
Since a modulator is not required for PWM generation, the decision variables for FCS-MPC are the switching states ($S_a, S_b, S_c$) in each phase of the 3-phase, 2-level inverters used in this system and the relationship between the switching state vector $S_{abc}$ and the finite number of control variables $\textbf{u}(k)$ is given as
\begin{equation}
\begin{aligned}\label{switching_S}
S_{abc} &= \frac{2}{3}(S_a+S_be^{j\frac{2\pi}{3}}+S_ce^{j\frac{4\pi}{3}}) \\
\mathbf{u}_i(k)&=V_{dc}S_{abc}   \quad \forall i \in \{1,2, \hdots, M\} .
\end{aligned}
\end{equation}
The control objective here is to regulate the voltage $\textbf{v}_d$ across the filter capacitor and track the injected current provided by the voltage loop using the following optimization problem:
\begin{equation}
\begin{aligned}
\Minimize_{\mathbf{v},\mathbf{i}} \quad & \left\|\mathbf{y}_\text{ref}-C\mathbf{x}(k+1)\right\|^2\\
\textrm{s.t.} \quad & \textbf{x}(k+1)=\mathrm{A_d}\textbf{x}(k)+\mathrm{B_d}\textbf{u}(k) +\mathrm{E_d\textbf{i}_{L{dq}}(k)}, \\
\quad & \mathbf{u}(k) \in \mathcal{U},
\end{aligned}
\end{equation}
where $\textbf{y}(k) = [\mathbf{v}_d^\top,  \mathbf{v}_q^\top,\mathbf{i}_d^\top,  \mathbf{i}_q^\top]^\top$ and $C \in \mathbb{R}^{{2n} \times {4n}}$.
The reference currents $\textbf{i}_{d q r e f}(k)=[\textbf{i}_{dref}(k)^\top,  \textbf{i}_{qref}(k)^\top]^\top$ for the FCS-MPC controller is generated from the filter dynamic by using PI controller in the voltage control loop as shown in (\ref{idqref_input}) \cite{8557936}.
\begin{equation}
\begin{split}\label{idqref_input}
     \textbf{i}_{d q r e f}=\textbf{i}_{d q}-J \omega_g C_i \textbf{v}_{d q}+K_p\left(\textbf{v}_{d q r e f}-\textbf{v}_{d q}\right)+\\
     K_i \int_{0}^{\tau}\left(\textbf{v}_{d q r e f}-\textbf{v}_{d q}\right) d \tau
\end{split}  
\end{equation}

Reference voltages $\textbf{v}_{d q r e f}(k)=[\textbf{v}_{dref}(k)^\top,  \textbf{v}_{qref}(k)^\top]^\top$ is the grid rated voltage of $220\mathrm{~V}$ with $\textbf{v}_{qref}=0$, as mentioned in remark \ref{remark-2}. And PI controller gains $K_p$ and $K_i$ are carefully selected. Then, since AC grid reference voltage doesn't change much at every sampling time and the inverter switching frequency is much higher than the AC grid frequency, we can approximate both the voltage and the current references as $\textbf{v}_{dref}(k+1)\approx \textbf{v}_{dref}$ and $\textbf{i}_{dref}(k+1)\approx \textbf{i}_{dref}$.

\section{Numerical Simulation Results}
 We implement the proposed Droop-based FCS-MPC control algorithm on three IBRs ($n =3$) with the current flow through two transmission lines ($m = 2$) connecting them as shown in Figure \ref{fig:fig3}, where the incidence matrix $\mathcal{B}_{i k} \in 
 \mathbb{R}^{3\times2}$ is given as:
 
\begin{equation*}
\mathcal{B}_{i k}=\left[\begin{array}{cc}
1 & 0 \\
-1 & 1 \\
0 & -1
\end{array}\right]
\end{equation*}
\begin{figure}[h]
    \centering \includegraphics[width=0.32\textwidth]{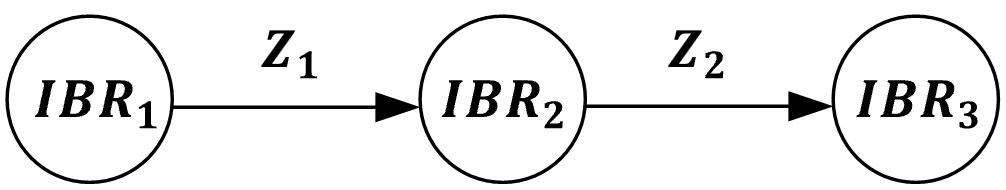}
    \caption{Current flow between connected IBRs}
    \label{fig:fig3}
\end{figure}

In our numerical simulation, we assume a homogeneous configuration of IBRs, and the parameters are given in Table 1. The transmission line impedances are $Z_{1}$ connecting IBR1 to IBR2 and  $Z_{2}$ connecting IBR2 to IBR3 as shown in Figure \ref{fig:fig3}. $Z_{1}$ and $Z_{2}$ have resistances $r_{t1} =r_{t2} =0.2 \Omega$ and inductances $l_{t1}=l_{t2} = 0.3\mathrm{~mH}$. Constant impedance loads are available at buses 1, 2 and 3 as $S_{L1}= S_{L2}=1+j0.2 \mathrm{~kVA}$, $S_{L3}= 3+j0.2 \mathrm{~kVA}$. Additional load of $S_{L4}= 2+j0.2 \mathrm{~kVA}$ is suddenly added to bus 3 at $t = 1s$ and later disconnected at $t = 3s$.

\begin{table}[h!]
\caption{parameters for the IBRs}
\centering
\begin{tabular}{ll}
\hline Parameters & Value \\
\hline DC Source Voltage,$V_{dc}$ & $600\mathrm{~V}$ \\
Rated Voltage (rms) & $220 \mathrm{~V}$ \\
Grid frequency, $\omega_g/(2\pi)$ & $50 \mathrm{~Hz}$ \\
Switching frequency, $f_s$ & $10 \mathrm{~kHz}$ \\
Filter inductor, $l_i$ & $3.5\mathrm{~mH}$ \\
Filter capacitor, $C_i$ & $50\mathrm{~\mu F}$ \\
Filter resistor, $r_i$ & $0.2\mathrm{~\Omega}$ \\
P-f droop coefficients, $m_{p1}=m_{p2}=m_{p3}$ & $6e^{-5}\mathrm{~rad/s/W}$\\
Q-V droop coefficients, $n_{q1}=n_{q2}=n_{q3}$ & $4e^{-3}\mathrm{~V/Var}$ \\
\hline
\end{tabular}
 \end{table}

\begin{figure}[h!]
    \centering \includegraphics[width=0.7\textwidth]{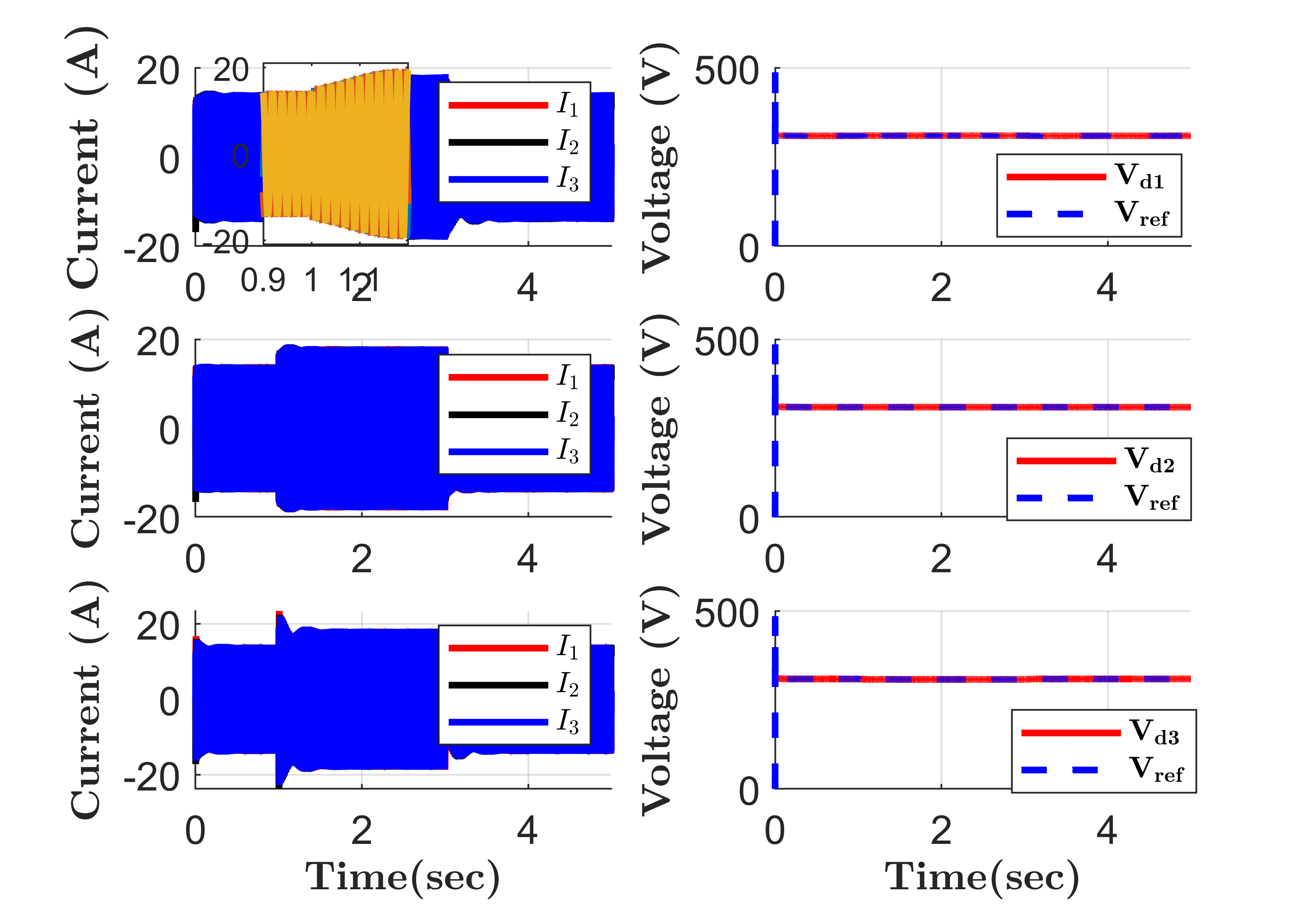}
    \caption{(a) Response of 3-phase currents to load changes and (b) Filter capacitor voltage tracking for IBR1, IBR2 and IBR3.}
    \label{3ph_currents}
\end{figure}

\begin{figure}[t!]
    \centering \includegraphics[width=0.7\textwidth]{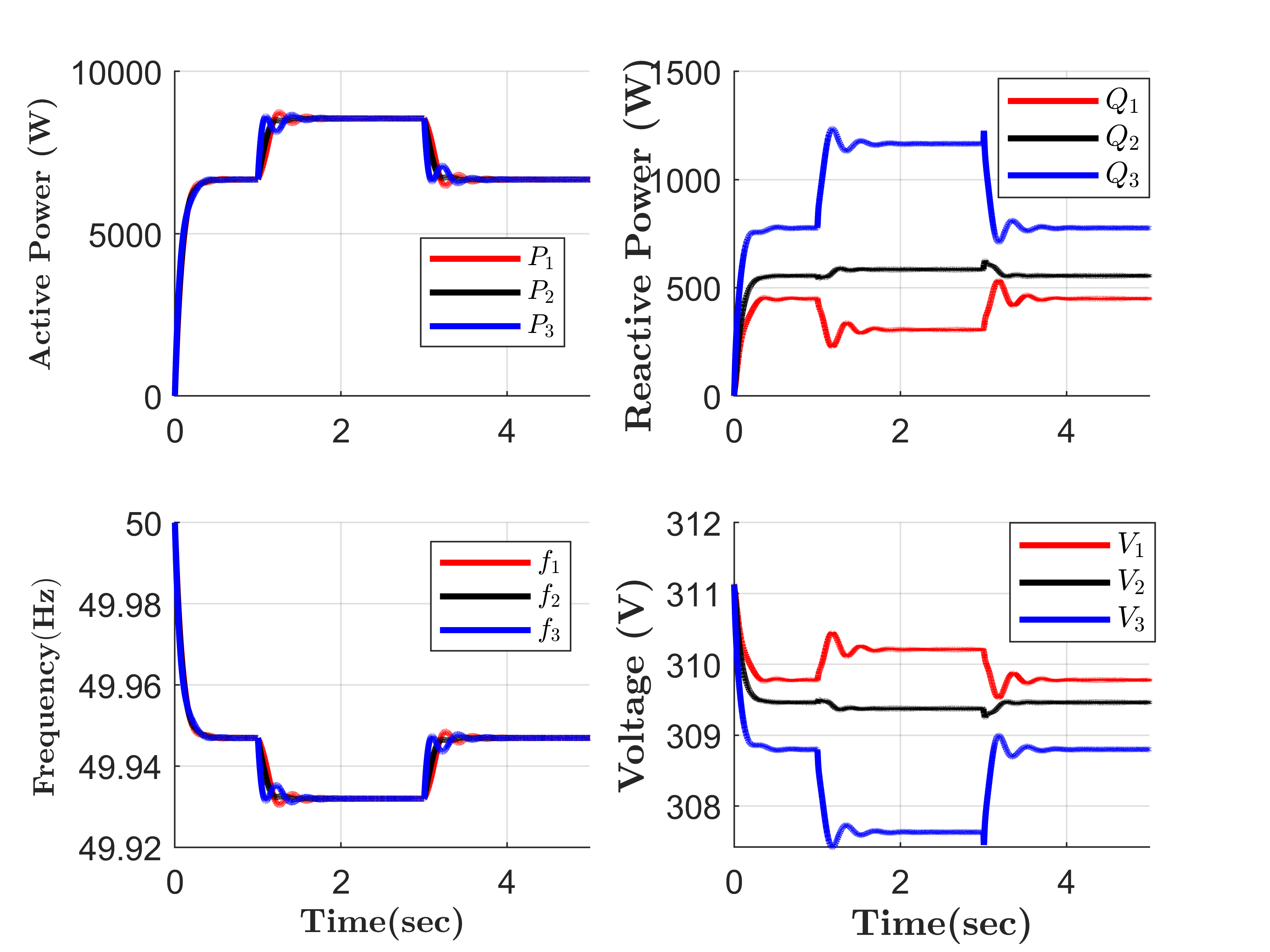}
    \caption{FCS-MPC-based Droop for IBR1, IBR2 and IBR3: 
    (a)Active-Power/Frequency (b) Reactive-power/Voltage}
    \label{active_P_w}
\end{figure}

Figure \ref{3ph_currents}(a) shows that 3-phase currents in each IBR change when the load changes. That is, the currents increase when load is added at bus 3, and return to its initial value when the load is disconnected. Figure \ref{3ph_currents}(b) shows that the output voltage across the capacitor does not change when the load is disconnected and reconnected.  The voltage remains at a peak value of $311 \mathrm{V}(220 \mathrm{V_{rms}})$ regardless of the change in load. This confirms that the FCS-MPC provides effective voltage regulation across the filter capacitor at the AC buses. 

Figure \ref{active_P_w}(a) shows that efficient frequency synchronization and active power sharing with fewer oscillations are achieved at a steady state. Frequency deviation is minimal and reaches a steady state quickly during connection and disconnection of load at bus 3. Active power is equally shared proportional to the stiff frequency maintained in the AC grid. This shows the effectiveness of the decentralized droop-based FCS-MPC control for active power sharing. Slight frequency deviation is normal based on (\ref{v-w_droop}) but this can be further improved by a higher-level controller which is not considered here. Figure \ref{active_P_w}(b) shows the reactive power sharing for the output voltages of IBR1, IBR2, and IBR3. Reactive power changes with respect to changes in load and this deviation is more pronounced at bus 3 where additional load is added. Although the inverter voltages do not deviate significantly from each but the reactive is more pronounced at higher load changes. Droop-based FCS-MPC reduces chattering and oscillations in reactive power sharing and the slight deviation is due to the grid impedance mismatch \cite{zhong2011robust}. And has been addressed in literature by using virtual impedance droop control \cite{Anubi_2022, he2011analysis}. Unlike the P-f droop, the Q-V droop is local to each inverter depending on how long they are connected to each other with respect to line impedance variations. However, the reactive powers and the voltages of the IBRs reach a steady state quickly as the load connects and disconnects at bus 3 efficiently. This shows the effectiveness of FCS-MPC in the inner control loop. 

\section{Conclusion}

A Droop-based finite set model predictive control technique is proposed to regulate the voltage across the filter capacitor at the load bus of an islanded AC microgrid. This method provides a PWM signal to control the inverters directly without the need for a modulator. Accurate frequency synchronization and effective active power sharing are achieved at a steady state with fewer oscillations. Filter capacitor voltage at the AC bus remains unchanged despite the load changes as depicted in the numerical simulations thereby confirming the affirmation in Figure 4(b). Reactive power sharing is also achieved with fewer oscillations and low inverter voltage variations. This method eliminates the need for tuning parameter gains in the inner current loop controller, making it faster and the slight frequency deviation at steady state can be further improved by introducing a secondary or supervisory controller in future work. Also, we developed the condition for bounded stability for FCS-MPC.

\bibliographystyle{IEEEtran}
\bibliography{myreferences}

\end{document}